\providecommand{\U}[1]{\protect\rule{.1in}{.1in}}
\providecommand{\comment}[1]{}
\newcommand{\CE}{\mathcal E}		
		\newcommand{\CN}{\mathcal N}
\newcommand{\CS}{\mathcal S}
\newcommand{\BN}{\mathbb N}
\newtheorem{theorem}{Theorem}[section]
\newtheorem*{namedtheorem}{\theoremname}
\newcommand{\theoremname}{testing}
\newtheorem*{rep@theorem}{\rep@title}
\newcommand{\newreptheorem}[2]{%
\newenvironment{rep#1}[1]{%
 \def\rep@title{#2 \ref{##1}}%
 \begin{rep@theorem}}%
 {\end{rep@theorem}}}
\newtheorem{claim}[theorem]{Claim}
\newtheorem{corollary}[theorem]{Corollary}
\newtheorem{lemma}[theorem]{Lemma}
\newtheorem*{lemma*}{Lemma}
\theoremstyle{definition}
\newtheorem*{definition*}{Definition}
\newcommand{\inj}{\mathrm{inj}}
\newcommand{\BZ}{\mathbb{Z}}
\newcommand{\BR}{\mathbb{R}}
\newcommand{\BH}{\mathbb{H}}
\newcommand{\rank}{\mathrm{rank}}
\title{Subgroups of bounded rank in hyperbolic $3$-manifold groups}
\author{Ian Biringer}
\begin{document}
\maketitle
\begin{abstract}
We prove a finiteness theorem for subgroups of bounded rank in  hyperbolic $3$-manifold groups. As a consequence, we show that every bounded rank covering tower of closed hyperbolic $3$-manifolds  is a tower of finite covers associated to a fibration over a $1$-orbifold. \end{abstract}

\section{Introduction}

Suppose that $M$ is an orientable $3$-manifold and $O$ is a $1$-dimensional orbifold, both without boundary, so that $O$ is homeomorphic to one of  
\begin{equation} \BR, \  \BR/(x\mapsto -x), \  S^1, \text{ or } S^1 / (z\mapsto -z). \label{whatiso}\tag{$\diamond$}\end{equation}
A \emph{fibration of $M$ over $O$} is a fiber orbibundle $f : M \longrightarrow O$, e.g.\ as defined in \S 3 of \cite{biringer2021chabauty}. If $p\in O$, the preimage $S=f^{-1}(p) \subset M$ is called a \emph{regular fiber} or a \emph{singular fiber} depending on whether $p$ is a regular or singular point of $O$. Any singular fiber is a one-sided non-orientable surface embedded in $M$, while a regular fiber is a two-sided orientable surface. The preimage of any closed interval contained in the regular part of $O$ is  an embedded trivial interval bundle $S \times [0,1] \hookrightarrow M$, while the preimage of any closed interval containing a single singular point is a twisted interval bundle over the corresponding singular fiber. So, depending on the homeomorphism type of $O$ as in  \eqref{whatiso}, the manifold $M$ is homeomorphic to either: 
a trivial interval bundle $S \times \BR$ over an orientable surface $S$, a twisted open interval bundle over a nonorientable surface,  a mapping torus over an orientable surface, or a manifold obtained by gluing two copies of a twisted interval bundle over a non-orientable surface together along their boundaries.

A fibration of $M$ over $O$ gives a short exact sequence
\begin{equation}\label{SES}\tag{$\heartsuit$}
1 \longrightarrow \pi_1 S \longrightarrow \pi_1 M \longrightarrow Q \longrightarrow 1,
\end{equation}
where $S$ is a regular fiber, and the quotient $Q$ is either trivial, $\BZ/2\BZ$, infinite cyclic or infinite dihedral, depending on $O$ as in \eqref{whatiso}.  We call the normal subgroup $\pi_1 S \leq \pi_1 M$ the \emph{fiber subgroup}; note that normality means that this subgroup is well-defined by the fibration, independent of basepoints. 

Our main theorem is as follows.
\begin{theorem}[Finiteness for Bounded Rank Subgroups]\label {main} Let $M$ be an orientable, hyperbolizable $3$-manifold with finitely generated fundamental group and no $\BZ^2$ subgroups in $\pi_1 M$. Given $k\in \BN$, there is a finite set $\mathcal S$ of subgroups of $\pi_1 M$, such that for any subgroup $H\leq \pi_1 M$ with rank at most $k$, we have either
\begin{enumerate}
	\item $H = H_1 \star \cdots \star H_n \star F$, where each $H_i$ is conjugate in $\pi_1 M$ to an element of our finite set of subgroups $\mathcal S$, and $F\leq G$ is free,
	\item the cover $N$ corresponding to $H$ is compact and fibers over a $1$-orbifold, with fiber subgroup an element of $\mathcal S$.
\end{enumerate}
\end{theorem}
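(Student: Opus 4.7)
The plan is to reduce to freely indecomposable subgroups via Grushko, and then to classify freely indecomposable subgroups of rank $\leq k$ using tameness together with Canary's covering theorem. Writing $H = H_1 \star \cdots \star H_n \star F$ with each $H_i$ freely indecomposable non-cyclic and $F$ free, the number and ranks of the $H_i$ are bounded by $\rank H \leq k$, so it suffices to produce a finite set $\mathcal S$ of freely indecomposable subgroups of $\pi_1 M$ such that every freely indecomposable rank-$\leq k$ subgroup $H \leq \pi_1 M$ is either conjugate into one element of $\mathcal S$ (contributing to the free product in (1)) or exhibits itself as a fiber subgroup as in (2).

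So fix a freely indecomposable $H \leq \pi_1 M$ of rank at most $k$, and let $N \to M$ be the associated cover. By the Tameness Theorem (Agol, Calegari--Gabai), $N$ admits a compact core $C$ with $\pi_1 C = H$; the freely indecomposable hypothesis together with the absence of $\BZ^2$ forces $\partial C$ to be incompressible, and the rank bound restricts $C$ to finitely many homeomorphism types. Each end of $N$ is either geometrically finite or simply degenerate. By Canary's Covering Theorem, any degenerate end of $N$ is, up to passing to a finite cover, a neighborhood of an end of $M$ coming from a fibration of some finite cover of $M$ over a $1$-orbifold. Consequently, if \emph{every} end of $N$ is degenerate, then $N$ itself must be compact and fiber over a $1$-orbifold, and its fiber subgroup lies in the finite list of fiber subgroups of virtual fibrations of $M$, which we add to $\mathcal S$. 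This handles conclusion (2).

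The main obstacle is the remaining case, where $N$ has at least one geometrically finite end. Here I would argue by contradiction using a Chabauty compactness argument in the spirit of \cite{biringer2021chabauty}. Suppose there were infinitely many pairwise non-conjugate freely indecomposable rank-$\leq k$ subgroups $H_i \leq \pi_1 M$, each with cover $N_i$ having a geometrically finite end. Choose basepoints $p_i \in N_i$ in the thick part of the convex core near such an end; the Margulis lemma together with the bounded topological type of the cores gives lower bounds on the injectivity radius at $p_i$ and upper bounds on the diameter of the relevant portion of the convex core. Passing to a subsequence, $(N_i, p_i)$ converges in the pointed Gromov--Hausdorff sense, and accordingly $H_i$ converges in the Chabauty topology to some $H_\infty \leq \pi_1 M$ of rank $\leq k$. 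Since $\pi_1 M$ acts discretely on $\BH^3$, Chabauty convergence of bounded rank subgroups with bounded-geometry compact cores upgrades to eventual conjugacy: for all large $i$ a fixed generating set of $H_\infty$ can be realized by elements of $\pi_1 M$ lying arbitrarily close to generators of $H_i$, and discreteness forces these to agree up to conjugation. This contradicts pairwise non-conjugacy. The finitely many limits $H_\infty$ arising this way, combined with the fiber subgroups from the previous paragraph, constitute the required finite set $\mathcal S$.
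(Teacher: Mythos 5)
There is a genuine gap, and it sits at the heart of the argument. Your outer reduction (Grushko to freely indecomposable factors, and using the covering theorem to show that degenerate ends only occur for virtual fibers) parallels the paper. But the step that carries all the difficulty is asserted rather than proved: you claim that the rank bound gives ``finitely many homeomorphism types'' of the compact core and, in the Chabauty argument, ``upper bounds on the diameter of the relevant portion of the convex core.'' Bounded rank does \emph{not} bound the diameter of $CC(N_i)$, even for covers of a fixed convex-cocompact $M$ with a uniform lower bound on injectivity radius; the cyclic covers of a fibered manifold, or more relevantly any cover whose convex core contains a very wide product region $\Sigma_g\times[0,T]$ with $T\to\infty$, are exactly the danger. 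Without a diameter bound your limit argument breaks at the decisive step: to upgrade Chabauty/geometric convergence of $(N_i,p_i)$ to \emph{eventual conjugacy} of the $H_i$, you need a generating set of $H_i$ represented by loops of uniformly bounded length at $p_i$; otherwise the Chabauty limit $H_\infty$ only sees the part of $H_i$ within bounded distance of the basepoint and may have strictly smaller rank, so discreteness gives you nothing about $H_i$ itself. Supplying that bound is precisely the content of the paper's two main inputs: the Biringer--Souto decomposition (Theorem \ref{BS}), which writes $CC_1(N)$ as boundedly many bounded-diameter blocks glued along at most $n(k)$ product regions of genus at most $g(k)$, and Claim \ref{shortprods}, which shows via a geometric limit plus Thurston's covering theorem that any product region of width greater than $L(k,M)$ is a collar of a fiber in a fibration of $N$ over a $1$-orbifold. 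Only after excluding the fibered case does one get $\diameter CC_1(N)\le D(k,M)$ and hence a bounded nerve complex and finitely many isomorphism (then, via Delzant, conjugacy) classes. Your proposal has no substitute for this dichotomy.

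Two secondary problems. First, ``if every end of $N$ is degenerate, then $N$ itself must be compact'' is false: the doubly degenerate cover $N\cong S\times\BR$ of a fibered closed $M$ has two degenerate ends and is noncompact (the paper's Lemma \ref{covering Karen} handles exactly this case, concluding that $M$ fibers with $S$ a fiber, not that $N$ is compact). Second, the finiteness of ``the list of fiber subgroups of virtual fibrations of $M$'' is not free: a priori there could be infinitely many non-conjugate doubly degenerate surface subgroups of bounded genus. The paper proves finiteness by combining Delzant's theorem (finitely many conjugacy classes of subgroups isomorphic to a fixed one-ended hyperbolic group) with the covering theorem (each such subgroup is normal in a subgroup of uniformly bounded index, so conjugacy classes are finite sets). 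Your write-up should either import these two facts or prove them; as it stands the finite set $\mathcal S$ has not been shown to be finite.
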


The proof uses technology developed by the author and Souto in \cite{biringer2017thick}. Briefly, equip $M$ with a convex-cocompact hyperbolic metric and let $N$ be a cover of $M$ such that $\pi_1 N$ is freely indecomposable and has bounded rank. The main theorem of \cite{biringer2017thick} says that the convex core of $N$ decomposes as a union of \emph{product regions} and \emph{building blocks}. The product regions are homeomorphic to $S \times I$, in such a way that the level surfaces $S \times t$ have bounded geometry, although the  region itself can be very wide. The building blocks all have bounded diameter, and hence only finitely many topological types. We show that if $N$ has a sufficiently wide product region, then it fibers as in (2), while otherwise, the convex core of $N$ has bounded diameter, which implies $\pi_1 N$ is one of finitely many subgroups $H_i$.

The case of Theorem \ref{main} (1) where $H$ has infinite index (or equivalently, the fact that there are only finitely many conjugacy classes of one-ended infinite index subgroups $H \leq \pi_1 M$ of bounded rank) can be deduced from earlier arguments in Kapovich-Weidmann~\cite{kapovich2005kleinian}, specifically their Theorem 7.5. Very recently, Weidmann-Weller \cite{weidmann2024foldings} have extended Kapovich-Weidmann's work to apply to $M$ that have rank $2$ cusps. At heart, all these arguments are similar, in that Kapovich-Weidmann and Weidmann-Weller use an algebraic version of the `carrier graphs' that are the essential tool in part of Biringer-Souto \cite{biringer2017thick}. However, we do not know how to prove Theorem~\ref{main} in full without relying on the rest of  \cite{biringer2017thick}. Also, the proof of Theorem~\ref{main} via the machinery of \cite{biringer2017thick} is quick and geometrically transparent.


As an application, we can characterize infinite chains of bounded rank subgroups of closed hyperbolic $3$-manifold groups. If $M$ is a closed $3$-manifold that fibers over a $1$-orbifold, then we have the short exact sequence \eqref{SES}, with $Q=\BZ$ or $D_\infty$, and any chain of finite index subgroups in $Q$ pulls back to a chain of finite index subgroups of $G$ with uniformly bounded rank.  We show that this is the only way to produce infinite chains of bounded rank.

\begin{corollary}\label{chains}
Suppose that $M$ is a closed, orientable hyperbolic $3$-manifold, and $$\pi_1 M \geq H_1 > H_2 > \cdots$$ is a chain of finite index subgroups such that $\sup_i \rank H_i <\infty$. Then after passing to a subsequence, $H_1$ is fibered over $S^1$ or $S^1/(z\mapsto -z)$ with associated SES $$1 \longrightarrow \pi_1 S \longrightarrow H_1 \longrightarrow Q \longrightarrow 1,$$ and $(H_i)$ is a chain of finite index subgroups of $H_1$ that is the preimage of a chain of finite index subgroups of $Q$.
\end{corollary}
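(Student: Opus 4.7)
The plan is to apply Theorem \ref{main} directly to the subgroups $H_i$ and exploit the fact that each is the fundamental group of a closed hyperbolic $3$-manifold. Since $M$ is closed hyperbolic, $\pi_1 M$ is finitely generated, hyperbolizable, and contains no $\BZ^2$; setting $k := \sup_i \rank H_i$, Theorem \ref{main} furnishes a finite set $\CS$ of subgroups of $\pi_1 M$ into which every $H_i$ fits via case (1) or case (2).

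First I would rule out case (1) for all but finitely many $i$. Each $H_i$ has finite index in $\pi_1 M$, so the corresponding cover $N_i$ is itself a closed hyperbolic $3$-manifold; hence $H_i$ is one-ended and non-free, and cannot admit any nontrivial free-product decomposition. Case (1) therefore collapses to the statement that $H_i$ is conjugate to a single element of $\CS$. Since the $H_i$ have strictly increasing finite indices in $\pi_1 M$ they are pairwise non-conjugate, and since $\CS$ is finite only finitely many terms of the chain can satisfy case (1). After discarding these terms (and relabeling), every $H_i$ falls under case (2).

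Next, since each $N_i$ is compact, the $1$-orbifold over which it fibers must also be compact---so homeomorphic to $S^1$ or $S^1/(z\mapsto -z)$---and the quotient $Q_i$ in \eqref{SES} is either $\BZ$ or the infinite dihedral group. The fiber subgroup of $H_i$ lies in the finite set $\CS$, so passing to a further subsequence I may assume that this fiber subgroup is a single fixed group $\pi_1 S \in \CS$ for every $i$. In particular, with $H_1$ now denoting the first term of the subsequence, $H_1$ fibers over $S^1$ or $S^1/(z \mapsto -z)$ with fiber subgroup $\pi_1 S$, supplying the short exact sequence in the statement.

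Finally, I would read off the chain structure. Since $\pi_1 S$ is normal in $H_1$ and $\pi_1 S \leq H_i \leq H_1$ for every $i$, the image $H_i/\pi_1 S$ is a subgroup of $Q = H_1/\pi_1 S$ of index $[H_1 : H_i] < \infty$, and $H_i$ equals its preimage under the quotient map $H_1 \to Q$; these images form a descending chain of finite index subgroups of $Q$, yielding exactly the stated conclusion. I expect the main obstacle to be the first step---verifying that case (1) of Theorem \ref{main} can occur only finitely often---which relies on combining the one-endedness of closed hyperbolic $3$-manifold groups with the finite list of conjugacy classes available in $\CS$.
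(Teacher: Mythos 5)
Your proposal is correct and follows essentially the same route as the paper: apply Theorem \ref{main} with $k=\sup_i\rank H_i$, note that each $H_i$ is one-ended (being a closed hyperbolic $3$-manifold group) so case (1) forces $H_i$ to be conjugate to a single element of $\CS$, which can happen only finitely often since the strictly increasing indices make the $H_i$ pairwise non-conjugate, and then pass to a subsequence with a fixed fiber subgroup. Your final paragraph spelling out why each $H_i$ is the preimage of a finite index subgroup of $Q$ is a detail the paper leaves implicit, and you handle it correctly.
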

\begin{proof}
Say $M,H_i$ are as above. Since the subgroups $H_i$ are finite index in $M$, are $1$-ended, and hence do not split nontrivially as free products. As the index of $H_i$ increases with $i$, none of these groups are conjugate to each other, so Theorem \ref{main} says that there are only finitely many $H_i$ that do not fiber over a $1$-orbifold with fiber subgroup an element of $\CS$. Remove these finitely many $H_i$, and pass to a subsequence so that for each $i$, the fiber subgroup is some fixed $K\leq \pi_1 M$.
\end{proof}

For context, let $M$ be a closed $3$-manifold. Lackenby \cite{Lackenbyexpanders} defined the \emph{rank gradient} of a chain $\pi_1 M \geq H_1 \geq H_2 \geq \cdots$ of finite index subgroups to be the limit
$$RG(H_i) := \lim_{i\to \infty} \frac{ \rank(H_i)-1}{[\pi_1 M:H_i]}.$$
Multiple authors have studied chains where $RG(H_i)=0$, i.e.\ where the rank grows sublinearly in the index. For instance, DeBlois--Frield--Vidussi~\cite{deblois2014rank} have shown that if $\phi : \pi_1 M \longrightarrow \BZ$ is a homomorphism, the chain $H_i = \phi^{-1}(i\BZ)$ has zero rank gradient if and only if $\phi$ is the surjection in the short exact sequence \eqref{SES} associated to a fibration of $M$ over the circle. In general, if $M$ is hyperbolic then every known example of a chain of subgroups of $\pi_1 M$ with zero rank gradient is constructed by modifying a chain coming from a fibration over a $1$-orbifold. It is unknown whether there are qualitatively different examples. For instance, let $(H_i)$ be a chain of subgroups of $\pi_1 M$, let $N_i$ be the associated tower of covers of $M$, and let $h(N_i)$ be their Cheeger constants. Is it true that $RG(H_i)=0 $ implies $h(N_i)\to 0$? It is easy to see that the Cheeger constant goes to zero in a chain coming from a fibration. A positive answer to this question would also disprove Gaboriau's \emph{Fixed Price Conjecture} in measurable dynamics, via work of Abert-Nikolov \cite{Abertrank}.

\subsection{Organization}

\S \ref{background} contains background necessary for the proof. In particular, in \S \ref{hyp} we review some hyperbolic geometry and prove a lemma that allows us to recognize when an embedded incompressible surface represents a fiber in a fibration of $M$ over a $1$-orbifold, while in \S \ref{thickbdd} we review the main theorem of Biringer-Souto~\cite{biringer2017thick}. The proof of Theorem \ref{main} is presented in \S \ref{pfsec}.

\subsection{Acknowledgements}

The author was partially supported by NSF CAREER award 1654114. This paper began as part of a joint project with Edgar A.\ Bering IV and Nir Lazarovich. Thanks to them for the inspiration!

\section{Background}
\label{background}

\subsection{Hyperbolic geometry}\label{hyp}
A (complete) \emph{hyperbolic $3$-manifold} is a quotient $M=\Gamma \backslash \BH^3$, where $\Gamma$ acts freely and properly discontinuously by isometries. A topological $3$-manifold is called \emph{hyperbolizable} if it is homeomorphic to a hyperbolic $3$-manifold as above. If $M$ is hyperbolic, the \emph{convex core} of $M$ is the quotient $$CC(M):= \Gamma \backslash CH(\Lambda(\Gamma)),$$ where $\Lambda(\Gamma)\subset \partial \BH^3$ is the \emph{limit set} of $\Gamma$, and $CH( \cdot )$ denotes the hyperbolic convex hull. See e.g. \cite{Matsuzakihyperbolic,Benedettilectures} for details. Since $CC(M)$ may be less than $3$-dimensional, it is sometimes more convenient to work with its closed $1$-neighborhood $CC_1(M)$. 

 If $M$ is an orientable hyperbolic $3$-manifold with finitely generated fundamental group, the Tameness Theorem of Agol \cite{Agoltameness} and Calegari-Gabai \cite{Calegarishrinkwrapping} says that $M$ is homeomorphic to the interior of a compact $3$-manifold with boundary. Equivalently, the ends of $M$ are all `tame', i.e.\ they all have neighborhoods homeomorphic  to $S\times (0,\infty)$, where $S$ is a closed orientable surface. Assuming for simplicity that $M$ has no cusps, work of Thurston, Bonahon and Canary \cite{Bonahonbouts,Canaryends} implies that each end $\CE$ of $M $ is  either \emph{convex cocompact}, meaning that it has a neighborhood that lies outside $CC(M)$, or \emph{degenerate}, meaning that it has a neighborhood $U \cong S\times (0,\infty)$ that contains a sequence of bounded area surfaces $f_i : S \longrightarrow U$ in the homotopy class of a level surface, where $f_i$ exits the end as $i\to \infty$. See e.g.\ \cite{Matsuzakihyperbolic} for more details and precise definitions. As an example, if $M$ fibers over the circle with fibers homeomorphic to a surface $S$, then the obvious infinite cyclic cover $N$ of $M$ is homeomorphic to $S\times \BR$ and both ends of $N$ are degenerate: one can take the required bounded area surfaces to be all the lifts of a fixed surface $S \longrightarrow M$ in the homotopy class of the fiber.

     Suppose that $M$ is an orientable hyperbolic $3$-manifold and $S \looparrowright M$ is an immersed $\pi_1$-injective closed surface. The cover $M_S $ of $M$ corresponding to $\pi_1 S$ is homeomorphic to $S\times \BR$; this follows from the Tameness Theorem and some standard $3$-manifold topology arguments, c.f.\ \cite{Hempel3-manifolds}. We call  $M_S$ \emph{doubly degenerate} if $M_S$ has two degenerate ends; in this case we also call $S$ `doubly degenerate'.

 \begin{lemma}\label{covering Karen}
Suppose that $M$ is an orientable hyperbolic $3$-manifold and $S\hookrightarrow M$ is a doubly degenerate, \underline{embedded} $\pi_1$-injective closed orientable surface. Then $S$ is a regular fiber in a fibration of $M$ over a $1$-orbifold.
 \end{lemma}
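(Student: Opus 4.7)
My approach is to analyze the preimages $p^{-1}(S)\subset M_S$ using the doubly degenerate hypothesis to force every component to be a compact copy of $S$, then derive normality of $\pi_1 S$ in $\pi_1 M$ and identify the quotient inside $D_\infty$.

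First I would show every component $S_n\subset p^{-1}(S)$ is compact with $p|_{S_n}\colon S_n\to S$ a homeomorphism. A component $S_n$ corresponds to a double coset $\pi_1 S\cdot\gamma_n\cdot\pi_1 S\subset\pi_1 M$, and its fundamental group inside $\pi_1 M_S=\pi_1 S$ is the stabilizer of the lift $\gamma_n\tilde S\subset\BH^3$, namely $\pi_1 S\cap\gamma_n\pi_1 S\gamma_n^{-1}$. The doubly degenerate hypothesis gives $\Lambda(\pi_1 S)=\partial\BH^3$; the same holds for any $\pi_1 M$-conjugate. A Kleinian group argument---the intersection of two Kleinian subgroups with full conical limit set has full conical limit set (Susskind--Swarup), and a finitely generated subgroup of a doubly degenerate closed surface group with full limit set has finite index (a consequence of tameness plus Canary's Covering Theorem)---shows $\pi_1 S_n$ is finite index in $\pi_1 S$. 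Hence $S_n$ is a compact finite cover of $S$ embedded in $M_S\cong S\times\BR$. Waldhausen's uniqueness theorem then says $S_n$ is isotopic to a level $S\times\{t_n\}$, so $S_n\cong S$; Euler characteristics force $\deg(p|_{S_n})=1$, giving $\pi_1 S_n=\pi_1 S$ and $p|_{S_n}$ a homeomorphism. Simultaneous leveling of disjoint incompressible surfaces (cf.\ Freedman--Hass--Scott) makes $\{t_n\}$ a discrete subset of $\BR$.

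Next, I would extract $\pi_1 S\lhd\pi_1 M$. Since $\pi_1 S_n=\pi_1 M_S$, the preimage of $S_n$ in $\BH^3$ has a unique component equal to $\gamma_n\tilde S$, and its full $\pi_1 S$-stabilizer translates to $\gamma_n^{-1}\pi_1 S\gamma_n\subseteq\pi_1 S$. Every $\gamma\in\pi_1 M$ lies in some double coset $\pi_1 S\gamma_n\pi_1 S$, so this containment extends to all of $\pi_1 M$, giving normality.

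Finally, $Q=\pi_1 M/\pi_1 S$ is the deck group of $M_S\to M$, acting freely and transitively on $\{S_n\}$---a linearly ordered discrete subset of $M_S\cong S\times\BR$ (ordered by height). The action embeds $Q$ into $\BZ/2\BZ$ for finite $\{S_n\}$ (so $Q\in\{1,\BZ/2\BZ\}$) and into $\mathrm{Isom}(\BZ)=D_\infty$ for bi-infinite $\{S_n\}$ (with free transitive action restricting $Q$ to $\{\BZ,D_\infty\}$). Each of these four groups appears as $Q$ in the short exact sequence \eqref{SES} for a fibration of $M$ over the corresponding $1$-orbifold in \eqref{whatiso}, with $S$ as a regular fiber. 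The main obstacle is the compactness step: converting the doubly degenerate hypothesis into $[\pi_1 S:\pi_1 S_n]<\infty$, without which the components could be non-compact covers of $S$ and Waldhausen's theorem would not force normality.
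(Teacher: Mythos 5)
Your overall architecture (show every component of $p^{-1}(S)\subset M_S$ is a compact homeomorphic copy of $S$, deduce $\pi_1 S\lhd\pi_1 M$, identify $Q$ inside $D_\infty$) is a legitimate and genuinely more algebraic route than the paper's, and you correctly flag where the difficulty sits. But the step you flag is in fact a gap as written. The Susskind--Swarup intersection theorem is a statement about \emph{geometrically finite} subgroups; here $\pi_1 S$ and $\gamma_n\pi_1 S\gamma_n^{-1}$ are doubly degenerate, so that theorem does not apply, and the general inclusion $\Lambda_c(J)\cap\Lambda_c(K)\subseteq\Lambda(J\cap K)$ for arbitrary subgroups of a Kleinian group is not a black box you can cite. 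There is also a circularity: the results you would want to fall back on (Anderson's theorem on intersections of tame subgroups, or the ``full limit set plus finitely generated implies finite index'' consequence of the covering theorem) require the intersection $\pi_1 S\cap\gamma_n\pi_1 S\gamma_n^{-1}$ to be finitely generated, which is exactly what is not known a priori --- a component of $p^{-1}(S)$ could in principle be an infinite, infinitely generated cover of $S$. Finally, note that even for a degenerate group the conical limit set is a proper subset of $\partial\BH^3$, so ``full conical limit set'' needs care. In short, the compactness of the $S_n$ is the whole content of the lemma, and your proposed tools do not deliver it.

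The paper closes exactly this gap by applying Thurston's covering theorem to the covering $M_S\to M$ itself at the outset: either $M_S\to M$ is finite-to-one, in which case $p^{-1}(S)$ (indeed the preimage of a compact core) is automatically compact, or the covering factors through a cyclic cover of a closed fibered manifold $N$, in which case the components of the preimage of $S$ in $N$ are compact surfaces homotopic to the fiber and everything is again controlled. From there the paper argues topologically, using Freedman--Hass--Scott and Waldhausen to show $S$ cuts $M$ into trivial or twisted interval bundles, rather than extracting normality of $\pi_1 S$ algebraically. If you replace your limit-set step with this dichotomy, the rest of your argument (Euler characteristic forcing degree one, co-Hopficity of surface groups giving $\gamma_n\pi_1 S\gamma_n^{-1}=\pi_1 S$, and the free action of $Q$ on the ordered discrete family of levels) goes through, though your final step still owes a sentence explaining why knowing $Q\in\{1,\BZ/2\BZ,\BZ,D_\infty\}$ actually produces the orbibundle structure --- one must check that the compact regions between adjacent levels are product $I$-bundles and that an end-swapping involution yields a twisted $I$-bundle, which is where the paper invokes Waldhausen's Proposition 4.1.
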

 \begin{proof}
Let $M_S $ be the cover of $M$ corresponding to $S$, so $M_S$ is a doubly degenerate hyperbolic $3$-manifold homeomorphic to $S\times \BR$. Thurston's covering theorem \cite[Theorem 9.2.2]{Thurstongeometry} says that either $\pi: M_S \longrightarrow M$ is finite-to-one, or $\pi$ factors as
\begin{equation}\label{factor}M_S \longrightarrow N \overset{\rho}{\longrightarrow} M,\end{equation}
where the first map is a cyclic covering map onto a closed hyperbolic $3$-manifold fibering over the circle and the second map is a finite cover. 

Suppose first that $\pi: M_S\longrightarrow M$ is finite-to-one. Then $\pi_1 M$ has a finite index surface subgroup, so it is finitely generated and does not split as a free product. Hence $M$ is tame and any standard compact core $C\subset M$ has incompressible boundary. Here, a `standard compact core' $C$ is a compact submanifold such that $M\setminus int(C)$ is homeomorphic to $\partial C\times [0,\infty)$. Each component $T \subset \pi^{-1}(\partial C)$ is then an incompressible, embedded closed surface in $M_S \cong S \times \BR$, and therefore is a `level surface', isotopic to $S \times \{t\}$. The restriction $\pi |_T$ is an embedding: if not, it would nontrivially cover a component of $\partial C$, but since $\pi |_T$ is homotopic in $M$ to the embedded surface $S$, work of Freedman-Hass-Scott (c.f.\ Lemma 3.1 in \cite{biringer2017thick}) implies that $\pi |_{T}$ is homotopic to an embedding within an arbitrarily small neighborhood of its image, which is impossible since components of $\partial C$ are two-sided, and hence their regular neighborhoods are products. Since $\pi^{-1}(C)$ is connected and is bounded by level surfaces, it must be a trivial interval bundle bounded by $\pi^{-1}(\partial C)$, which has two components. Since each of these components embeds under $\pi$, the covering map $\pi : \pi^{-1}(C) \longrightarrow C$ is either a homeomorphism or is 2-1, depending on whether the two components of $\pi^{-1}(\partial C)$ have distinct $\pi$-images or not.  In the first case, $C$ is a trivial interval bundle, and hence $M$ is an open trivial interval bundle with $S$ a fiber, while in the second case Proposition 4.1 in \cite{Waldhausenirreducible} implies that $M$ is an open twisted interval bundle with $S$ a regular fiber. So, we're done.

Now suppose that $\pi$ factors as in \eqref{factor}. Every component of $\rho^{-1}(S) \subset N$ is an incompressible embedded surface that is homotopic to a finite cover of the fiber of $N$, so every component is actually in the homotopy class of the fiber. Hence $\rho^{-1}(S)$ cuts $N$ into a collection of trivial interval bundles $S \times [0,1]$, and each component of $\rho^{-1}(S)$ projects homeomorphically into $M$. As in the previous paragraph, this implies that $S$ cuts $M$ into pieces that are either trivial interval bundles $S \times [0,1]$ or twisted interval bundles with $S$ a regular fiber. So, $M$ fibers.
 \end{proof}
 
 \subsection{Thick manifolds with bounded rank}\label{thickbdd}
In this subsection we review some material from \cite{biringer2017thick}.
 Let $M$ be an orientable hyperbolic $3$-manifold. A \emph{product region} in $M$ is the image $U \subset M$  of a proper embedding $$\Sigma_g \times I \longrightarrow M, \ \ I=[0,1],[0,\infty), \text{or } (-\infty,\infty),$$  such that  for some regular neighborhood $\CN(U) \supset U$, we have:
	\begin{enumerate}
		\item  every point $p\in U$  is in the image of a NAT simplicial ruled surface $\Sigma_g \longrightarrow \CN(U)$  that is a homotopy equivalence,
		\item each component $ S \subset \partial U$ lies in the $1$-neighborhood of  another such NAT simplicial ruled surface.
	\end{enumerate}
Here, a simplicial ruled surface (SRS) is a map from a triangulated surface $\Sigma$, where edges map to geodesics and where the image of each triangle is foliated by geodesics. Equipping $\Sigma_g$ with the pullback metric, we say that the SRS is NAT (or `not accidentally thin') if there is no simple closed curves on $\Sigma_g$ with length less than $\epsilon$ that is nullhomotopic in $M$. The reader can see \cite[\S 5]{biringer2017thick} for precise definitions, but the main point is that NAT SRSs  $\Sigma_g \longrightarrow M$ 
in $\epsilon$-thick\footnote{The \emph{injectivity radius} of a hyperbolic $3$-manifold $M$, written $\inj(M)$, is the half the length of the shortest homotopically essential loop in $M$, and $M$ is called \emph{$\epsilon$-thick} if $\inj(M)\geq \epsilon$.} hyperbolic $3$-manifolds $M$ satisfy a `bounded diameter lemma', i.e.\ the diameter of $\Sigma_g$ in  the pullback metric is bounded above by a constant depending only on $g,\epsilon$. See \S 5.5 of \cite{biringer2017thick}. For the purposes of this paper, it's sufficient to think of a product region as just a submanifold homeomorphic to a surface cross an interval, where the geometry is bounded in the surface direction. The phrasing with SRSs is just a useful way to formulate this without specifying a priori what `bounded' means.

 In \cite[Theorem 13.1]{biringer2017thick}, the authors prove the following geometric decomposition theorem for convex cores of $\epsilon$-thick hyperbolic $3$-manifolds with bounded rank.

\begin{theorem}\label{BS}
Fix $k\in \BN$ and some sufficiently small $\epsilon>0$. Then there are constants $n=n(k),$ $ g=g(k)$, $B=B(k,\epsilon)$ as follows.

Suppose $M$ is a  complete,  orientable hyperbolic $3$-manifold with $$\rank(\pi_1(M))\le k, \ \ \inj(M)\ge\epsilon,$$ and assume that $\pi_1 M$ is freely indecomposable. Then $CC(M)$ contains a collection $\mathcal U$ of at most $n$ product regions, each with genus at most $g$, such that every component of $CC_1(M) \setminus (\cup_{U \in \mathcal U} int(U) ) $ has diameter at most $B$.
\end{theorem}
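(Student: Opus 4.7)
The plan is to use minimal-length \emph{carrier graphs} together with simplicial-ruled-surface sweepouts, following the strategy developed through \cite{biringer2017thick}. Since $\pi_1 M$ is freely indecomposable of rank at most $k$, I would fix a carrier graph $f: X \to M$, that is, a finite graph with $\rank(\pi_1 X) \le k$ and surjective $f_\ast: \pi_1 X \to \pi_1 M$, chosen to minimize total edge length. A standard folding argument bounds the number of vertices and edges of $X$ by a linear function of $k$, and minimality forces each edge to map to a geodesic segment and $f(X) \subset CC(M)$. From the carrier graph I would build NAT simplicial ruled surfaces of bounded genus passing near every point of $f(X)$: the construction turns the combinatorial structure of $X$ into a simplicial decomposition of a closed surface $\Sigma_g$ with $g \le g(k)$, together with a map $\Sigma_g \to M$ realizing each edge of $X$ as a geodesic arc and each triangle of $\Sigma_g$ as a ruled geodesic triangle. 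Minimality of $X$ rules out accidentally short nullhomotopic loops, so the SRS is NAT, and the bounded diameter lemma of \S 5 of \cite{biringer2017thick} bounds the diameter of its image in $M$ by some $B'(k,\epsilon)$.

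Next, I would split the edges of $X$ into those of length at most $L = L(k,\epsilon)$ and those of length greater than $L$. For each long edge $e$ I would construct a product region by sweeping: the NAT SRSs built at two interior points of $e$ are isotopic through a continuous family of NAT SRSs parameterized along $e$, because the combinatorial data determining the surface is constant along the interior of an edge. This family sweeps out an embedded $\Sigma_{g_e} \times I \hookrightarrow M$ satisfying conditions (1) and (2) of the definition of product region. Let $\CU$ be the resulting collection; since $X$ has $O(k)$ edges, $|\CU| \le n(k)$. Each component of $CC_1(M) \setminus \bigcup_{U \in \CU} \mathrm{int}(U)$ is then covered by the NAT SRSs associated to vertices and short edges of $X$ that meet the component, each of bounded diameter $B'(k,\epsilon)$; the combinatorics of $X$ bound how many such surfaces can chain into a single component, yielding $B(k,\epsilon)$ as a function of $B'$, $L$, and the edge count of $X$.

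The main obstacle I expect is the sweep step: producing an honest \emph{embedded} product region, and ensuring that the product regions built from distinct long edges are disjoint. Embeddedness of the sweep requires lifting to the cover of $M$ corresponding to $f_\ast(\pi_1 \Sigma_{g_e})$ and using the doubly-degenerate or product-end structure of that cover (cf.\ Lemma \ref{covering Karen}) to conclude that the sweep projects injectively back to $M$. Disjointness, and the matching of the complement to a bounded-diameter union of components, require a careful choice of the threshold $L$ relative to $B'(k,\epsilon)$, which yields the final constants in the theorem.
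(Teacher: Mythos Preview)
The paper does not actually prove this statement: Theorem~\ref{BS} is quoted from \cite[Theorem~13.1]{biringer2017thick} as background, with a short paragraph explaining why the freely-indecomposable hypothesis and the use of $CC_1(M)$ in place of $CC(M)$ make the present formulation a formal consequence of the one in \cite{biringer2017thick}. So there is no ``paper's own proof'' to compare against beyond that citation.

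Your sketch does capture the large-scale architecture of the argument in \cite{biringer2017thick}: minimal carrier graphs, NAT simplicial ruled surfaces of bounded genus built from the graph, long edges producing product regions, short pieces giving bounded-diameter complementary components. That said, as a standalone proof it has real gaps. First, minimizing total edge length is not what \cite{biringer2017thick} does; one needs the more refined notion of \emph{relative length} (lengths of edges measured relative to the convex hulls of subgraphs), and the key finiteness input is that a minimal carrier graph has bounded relative length, not bounded length. Without this, your short/long dichotomy does not control the geometry of subgraph neighborhoods, and the chaining argument for the complementary diameter bound breaks. Second, your appeal to Lemma~\ref{covering Karen} for embeddedness of the swept product regions is misplaced: that lemma takes as \emph{input} an embedded doubly degenerate surface and outputs a fibration of the ambient manifold; it does not help you promote an immersed SRS sweep to an embedded $\Sigma_g\times I$. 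The embeddedness and disjointness of product regions in \cite{biringer2017thick} come from a separate and substantial interpolation argument for simplicial ruled surfaces (their \S 6), not from the covering theorem. So while the outline is in the right spirit, it would not close without importing essentially all of \S\S 5--13 of \cite{biringer2017thick}, which is why the present paper simply cites the result.
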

 
 The statement above is slightly different from that given in \cite[Theorem 13.1]{biringer2017thick}. First, the `freely indecomposable' assumption in the statement above implies that there are no essential simple closed curves on $\partial CC(M)$ that are compressible in $M$; this is a  stronger version of a related hypothesis in the statement in \cite{biringer2017thick}. Also, in \cite{biringer2017thick} the authors work with the convex core and its interior rather than the $1$-neighborhood and the convex core itself, under a standing assumption that the convex core is $3$-dimensional, but the version above follows formally from theirs.

\medskip

\section{The proof of Theorem \ref{main}}\label{pfsec} Fix an orientable hyperbolic $3$-manifold $ M$ with finitely generated fundamental group and no rank two cusps. Here's what we will actually prove.

\begin{theorem}\label{realthm}
Given $k\in \BN$, there is some $g=g(k)$ such that there are only finitely many isomorphism types of subgroups $H \leq \pi_1 M$ such that 
\begin{enumerate}
    \item[(a)] $H$ is freely indecomposable and $\rank(H)\leq k$,
    \item[(b)] it's not the case that $M$ is compact, $H \leq \pi_1 M$ has finite index, and the associated cover $N$ of $M$ fibers over a compact $1$-orbifold with regular fiber of genus at most $g$.
\end{enumerate}
\end{theorem}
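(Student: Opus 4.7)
The plan is to equip $M$ with a convex cocompact hyperbolic metric---possible since $M$ is hyperbolizable with no $\BZ^2$ subgroups---so that $CC(M)$ is compact and $\inj(M)\ge\epsilon$ on $CC(M)$ for some $\epsilon=\epsilon(M)>0$. Given $H\le\pi_1 M$ satisfying (a), let $N$ be the associated cover. Since covering maps are local isometries and $CC(N)\subseteq\pi^{-1}(CC(M))$, one has $\inj(N)\ge\epsilon$ on $CC(N)$. Because $\pi_1 N=H$ is freely indecomposable of rank at most $k$, Theorem~\ref{BS} applied to $N$ yields constants $n=n(k)$, $g=g(k)$, $B=B(k,\epsilon)$ and a family $\CU$ of at most $n$ product regions of genus $\le g$ inside $CC(N)$ whose complementary pieces have diameter $\le B$.

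Fix a threshold $L=L(k,\epsilon)$ to be chosen and split into two cases. If every $U\in\CU$ has interval length at most $L$, then $\diameter(CC_1(N))\le nL+(n+1)B$, depending only on $k$ and $\epsilon$. A standard bounded-geometry finiteness argument---Gromov--Hausdorff compactness of pointed $\epsilon$-thick hyperbolic $3$-manifolds with bounded diameter convex cores, together with Mostow rigidity to promote geometric convergence to isometry---shows that only finitely many homeomorphism types of such $N$ can arise, and so only finitely many isomorphism types of $H$ occur in this case.

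Otherwise some $U\cong\Sigma_{g_0}\times I\in\CU$ has $|I|>L$. Pick a central level surface $S\hookrightarrow N$, which is embedded and $\pi_1$-injective. The key claim is that for $L$ large enough, $S$ is doubly degenerate in $N$. Granting this, Lemma~\ref{covering Karen} yields a fibration $N\to O$ over a $1$-orbifold with regular fiber $S$ of genus $\le g$. If $O$ is non-compact ($\BR$ or $\BR/(x\mapsto -x)$), then $N$ is an open interval bundle over a closed surface, so $H$ is a closed (possibly non-orientable) surface group of genus $\le g$---one of finitely many isomorphism types. If $O$ is compact ($S^1$ or $S^1/(z\mapsto -z)$), then $N$ is closed; since a non-compact convex cocompact $M$ has infinite volume inherited by any cover, $M$ itself must be closed and $N\to M$ must be a finite cover, placing $H$ in the excluded exception (b).

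The main obstacle is justifying the double-degeneracy claim. The lifted slab $\tilde U\cong\Sigma_{g_0}\times I\subset N_S\cong S\times\BR$ has only finite length $|I|$, so the NAT simplicial ruled surfaces foliating $\tilde U$ do not automatically exit the ends of $N_S$. My plan to overcome this is to apply Theorem~\ref{BS} a second time to $N_S$, whose fundamental group has rank at most $2g+1$: a hypothetical convex cocompact end of $N_S$ lying beyond $\tilde U$ would constrain every product region of $N_S$ on that side to have length bounded by the $B$-constant for rank $2g+1$, contradicting the existence of the long piece of $\tilde U$ once $L$ is chosen larger than this constant.
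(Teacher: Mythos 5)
Your overall architecture matches the paper's: put a convex cocompact metric on $M$, apply Theorem~\ref{BS}, and split according to whether all product regions are narrow (bounded diameter of $CC_1(N)$, hence finiteness) or some product region is wide (level surface doubly degenerate, hence a fiber via Lemma~\ref{covering Karen}). The narrow case is fine, modulo the quibble that Mostow rigidity is not the right tool for infinite-volume manifolds; the paper instead uses a nerve-complex argument to bound the homotopy type directly, which sidesteps this. But the step you correctly flag as ``the main obstacle'' --- that a sufficiently wide product region forces its level surface $S$ to be doubly degenerate --- is where your argument genuinely breaks. Your proposed mechanism is to apply Theorem~\ref{BS} to $N_S$ and claim that a convex cocompact end would force all product regions of $N_S$ on that side to have width at most the constant $B$. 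Theorem~\ref{BS} does not say this: it bounds the diameter of the \emph{complement} of some collection of product regions, and places no bound whatsoever on the widths of the product regions themselves --- indeed the whole point of the decomposition is that product regions can be arbitrarily wide. Concretely, a thick quasi-Fuchsian $N_S\cong S\times\BR$ with both ends convex cocompact can have arbitrarily large convex core and hence contain an arbitrarily wide product region; Theorem~\ref{BS} applied to it simply returns that wide region as one of the $U\in\CU$. So the existence of the long slab $\tilde U$ does not contradict a convex cocompact end, and no constant $L(k,\epsilon)$ independent of $M$ can work here.

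The paper's resolution (Claim~\ref{shortprods}) is essentially a compactness argument over the \emph{fixed} base $M$, and the dependence of $L$ on $M$ is essential. One takes a sequence of covers $N_i\to M$ with product regions of width tending to infinity, uses Lemma~6.20 of \cite{biringer2017thick} to extract a geometric limit $(N_\infty,p_\infty)$ that is \emph{doubly degenerate} (this is where the real work of converting ``wide'' into ``degenerate'' happens, and it requires the widths to go to infinity, not merely exceed a threshold computed from $N_S$ alone), and then uses convergence of the covering maps $\rho_i\circ\phi_i$ to $\rho_\infty:N_\infty\to M$ to conclude that for large $i$ the subgroup $(\rho_i)_*(\pi_1 S_i)\leq\pi_1 M$ is conjugate to $(\rho_\infty)_*(\pi_1 N_\infty)$, so that $S_i$ inherits double degeneracy from $N_\infty$. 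To repair your proof you would need to replace your second application of Theorem~\ref{BS} with this limiting argument (or with some other input, e.g.\ the finiteness of genus-$\le g$ surface subgroups of $\pi_1 M$ up to conjugacy), and accept that $L$ depends on $M$ and not just on $k$ and $\epsilon$.
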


Let's show how to derive Theorem \ref{main} from Theorem \ref{realthm}. 

\begin{proof}[Proof of Theorem \ref{main}]
    Let $\mathcal S^1$ be a minimal set of subgroups of $G$ representing all the conjugacy classes of one-ended subgroups $H \leq G$ with rank at most $k$ that satisfy (b) above.  When $M$ is noncompact, we set $\mathcal S=\mathcal S^1$. Otherwise, we set $\mathcal S$ to be the union of $\mathcal S^1$ with all doubly degenerate closed surface subgroups of $G$ that have genus at most $g$.

Since $G$ is a hyperbolic group, for any fixed finitely presented one-ended hyperbolic group $H$, there are a finite number of conjugacy classes of subgroups of $G$ isomorphic to $H$, see Delzant \cite{delzant1995image}. Any one-ended group is freely indecomposable, so by this and Theorem \ref{realthm}, the set $\mathcal S^1$ is finite. Delzant's theorem also implies that up to conjugacy, there are only finitely many doubly degenerate closed surface subgroups $K\leq G$ with genus at most $g$. However, if $M$ is compact, Thurston's covering theorem \cite[Theorem 9.2.2]{Thurstongeometry} implies that any such $K\leq G$ is a normal subgroup of a finite index subgroup $H_K \leq G$. Since there only finitely many conjugacy classes of such $K$, we can take the indices $[G : H_K]$ to be bounded, and hence finitely many $H_K$ suffice, implying that there are only finitely many $K \leq G$, even without identifying conjugates. So, $\mathcal S $ is always finite.

Now suppose $H \leq G$ has rank at most $k$. Since $G$ is torsion free, it follows from Grushko's Theorem and Stallings' Theorem, c.f.\ \cite{scott1979topological}, that $H$ can be written as $$H = H_1 \star \cdots \star H_n \star F,$$
where the $H_i$ are one-ended, $F$ is a free group, and all these free factors have rank at most $k$. If all the $H_i$ satisfy (b) above, they are conjugate into $\mathcal S$ and we're done. Otherwise, some $H_i$ has finite index, so the free product decomposition must be trivial, i.e.\ $H=H_i$, and the cover $N\longrightarrow M$ corresponding to $H$ fibers over a compact $1$-orbifold with regular fiber a surface of genus at most $g$, and the fiber subgroup of $H$ lies in $\mathcal S$ by construction.
\end{proof}

The rest of the section is devoted to the proof of Theorem \ref{realthm}. Since the conclusion of the theorem is topological, we may assume that $M$ is convex co-compact, say by Thurston's Haken hyperbolization theorem \cite{Kapovichhyperbolic} in the noncompact case.

\begin{claim}\label{shortprods}
There is some $L=L(g,M)$ as follows. Suppose that $N \longrightarrow M$ is a locally isometric covering map and $U \subset N$ is a product region with genus at most $g$. Then either $\mathrm{width}(U) \leq L$, or there is a fibration of $N$ over a $1$-dimensional orbifold where $U$ is a collar neighborhood of a regular fiber.
\end{claim}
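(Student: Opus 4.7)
The plan is to show that if $L$ is taken sufficiently large in terms of $g$ and the geometry of $M$, then a level surface in the middle of $U$ is embedded and doubly degenerate in $N$, at which point Lemma \ref{covering Karen} supplies the desired fibration of $N$.

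Since $M$ is convex co-compact without cusps (by the reduction in the paragraph preceding the claim), there is a uniform lower bound $\epsilon_0 = \epsilon_0(M) > 0$ on the injectivity radius of $M$, and hence of any cover $N$. The bounded diameter lemma from \cite[\S 5.5]{biringer2017thick} then bounds the diameter, and hence the area, of any NAT SRS $\Sigma_g \to N$ or any of its lifts by a constant $D = D(g,\epsilon_0)$; set $L$ to be much larger than $D$. Now pick a level surface $S = \Sigma_g \times \{1/2\}$ in the middle of $U$ and consider the cover $p \colon N_S \to N$ corresponding to $\pi_1 S \leq \pi_1 N$. Since $S$ is $\pi_1$-injective and embedded in $N$, tameness gives $N_S \cong S \times \BR$, and $U$ lifts to a product region $\tilde U \subset N_S$ homeomorphic to $\Sigma_g \times I$ with the same width as $U$. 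Because embedded $\pi_1$-injective surfaces in $S \times \BR$ are isotopic to level surfaces (Waldhausen), $\tilde U$ is isotopic to $S \times [t_0,t_1]$ for some $t_0 < t_1$, with $|t_1 - t_0|$ growing together with the width of $U$.

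By the product-region hypotheses, every point of $U$ (in particular, points arbitrarily close to each boundary component of $U$) lies within distance $1$ of a NAT SRS $\Sigma_g \to \mathcal N(U)$ that is a homotopy equivalence. These SRS lift to NAT SRS in $\mathcal N(\tilde U) \subset N_S$, all of bounded area and all in the homotopy class of the fiber of $N_S \to S$. Since $\tilde U$ is a single connected embedded interval bundle of large width separating $N_S$ into the two ends of $S \times \BR$, choosing $L$ large enough forces the lifts of the SRS near the two boundary components of $U$ to sit deep in opposite ends of $N_S$. Consequently both ends of $N_S$ admit sequences of bounded-area surfaces in the fiber homotopy class, so $S$ is doubly degenerate in $N$. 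Lemma \ref{covering Karen} then produces a fibration of $N$ over a $1$-orbifold with $S$ as a regular fiber, and because $U$ is an embedded $\Sigma_g \times I$ whose level surfaces are isotopic to $S$ and hence to regular fibers, the uniqueness of incompressible interval bundles between parallel surfaces implies that $U$ is isotopic to a collar neighborhood of a regular fiber.

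The step I expect to be the main obstacle is the quantitative control used above: verifying that the bounded-area lifts of SRS near the two boundary components of $U$ really do exit opposite ends of $N_S$ once the width exceeds $L$. The key point is that $\tilde U$ is a connected $\Sigma_g \times I$ embedded in $N_S \cong S \times \BR$, so after isotopy it takes the form $S \times [t_0,t_1]$ with $t_0 \to -\infty$ and $t_1 \to +\infty$ as $L \to \infty$; since the bounded-diameter lifts near the two boundary components of $\tilde U$ sit within distance $1 + D$ of $S \times \{t_0\}$ and $S \times \{t_1\}$ respectively, they are trapped on opposite sides of a central level surface, which is precisely what is needed to conclude double degeneracy.
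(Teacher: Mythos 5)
There is a genuine gap, and it is exactly at the step you flag as the main obstacle. Having a single bounded-area surface ``deep'' in each end of $N_S$ does not make those ends degenerate: degeneracy requires a \emph{sequence} of bounded-area surfaces in the fiber class exiting the end, i.e.\ going arbitrarily far out. For a fixed $U$ of large but finite width, the lifted NAT SRSs all lie within distance roughly $\mathrm{width}(U)/2 + D$ of the central level surface, so they are confined to a compact region of $N_S$ and say nothing about the geometry of the ends beyond $\tilde U$. The sentence ``both ends of $N_S$ admit sequences of bounded-area surfaces'' has no source for the sequence. Indeed the statement you are trying to prove directly is false without using the fixed target $M$: a thick quasi-Fuchsian manifold $N \cong \Sigma_g \times \BR$ with a very wide convex core \emph{is} (essentially) a single wide product region, and there $N_S = N$ is convex cocompact, not doubly degenerate, and $N$ does not fiber. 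This also shows your proposed constant $L$, which depends only on $g$ and $\inj(M)$ through the bounded diameter lemma, cannot suffice; the dependence of $L$ on $M$ is essential and more subtle.

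The paper's proof gets double degeneracy only in a limit, and then transfers it back using the rigidity of covering a fixed $M$. One argues by contradiction with a sequence of covers $\rho_i : N_i \to M$ containing product regions $U_i$ with $\mathrm{width}(U_i) \to \infty$; basepoints deep inside $U_i$ give geometric convergence (Lemma 6.20 of \cite{biringer2017thick}) to a doubly degenerate $N_\infty \cong \Sigma_g \times \BR$, and Arzela--Ascoli applied to $\rho_i \circ \phi_i$ produces a locally isometric covering $\rho_\infty : N_\infty \to M$. The key step your approach has no analogue of is that for large $i$ the subgroups $(\rho_i)_*(\pi_1 S_i)$ become conjugate in $\pi_1 M$ to $(\rho_\infty)_*(\pi_1 N_\infty)$, which is a doubly degenerate surface subgroup of $\pi_1 M$; hence $S_i$ itself is doubly degenerate for large $i$, and Lemma \ref{covering Karen} applies to $S_i \subset N_i$. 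If you want to salvage a direct argument, you would need some such mechanism for promoting ``wide in a cover of a fixed $M$'' to ``doubly degenerate,'' which is precisely the compactness input; the rest of your outline (embeddedness of the level surface, invoking Lemma \ref{covering Karen}, identifying $U$ with a collar of the fiber) is consistent with the paper.
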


In the statement of the claim, if $U$ is a compact product region, $\mathrm{width}(U)$ is defined to be the infimal length of a path in $U$ between the two boundary components of $U$. If $U$ is noncompact, we set $\mathrm{width}(U) := \infty$.

\begin{proof}
	It suffices to prove the claim for product regions of fixed genus $g$. Hoping for a contradiction, let $\rho_i:N_i \longrightarrow M$ be a sequence of locally isometric covers containing product regions $U_i \subset N_i$ with genus $g$, where $\mathrm{width}(U_i) \to \infty$, and where no $U_i$ is a collar neighborhood of a regular fiber in a fibration of $N_i$.

 Pick base points $p_i \in U_i$ such that $d(p_i,\partial U_i)\to \infty$. By Lemma 6.20\footnote{In \cite{biringer2017thick}, results are often stated using `width relative to the $\epsilon$-thin part' instead of width as defined here. However, in the current setting we are only looking at covers of a fixed convex cocompact $M$, so if $\epsilon$ is chosen smaller than the injectivity radius of $M$ then width and relative width coincide.} in \cite{biringer2017thick}, we can assume after passing to a subsequence that the sequence $(N_i,p_i)$ converges geometrically to a pointed doubly degenerate hyperbolic $3$-manifold $(N_\infty,p_\infty)$, where $N_\infty \cong \Sigma_g \times \BR$. Moreover, if we choose level surfaces $S_i \subset U_i$ at bounded distance from $p_i$ (see Lemma 6.5 of \cite{biringer2017thick}), then if $(\phi_i)$ is a sequence of almost isometric maps witnessing the geometric convergence (see Definition 9.1 of \cite{biringer2017thick}), for large $i$ we have that the image of $\phi_i$ contains $S_i$ and $\phi_i^{-1}(S_i)$ is a level surface in $N_\infty$.
 
 By Arzela-Ascoli, after passing to a subsequence we can assume that the maps $\rho_i\circ \phi_i$ converge to a locally isometric covering map $\rho_\infty : N_\infty \longrightarrow M$. It follows that for large $i$, the groups $(\rho_i)_*(\pi_1 S_i) $ and $(\rho_\infty)_*(\pi_1 N_\infty)$ are conjugate in $\pi_1 M$. So, $S_i$ is a doubly degenerate incompressible embedded surface in $N_i$, and hence is a regular fiber in some fibration over a $1$-orbifold by Lemma \ref{covering Karen}, a contradiction.
\end{proof}

 So, let $\epsilon>0$ be smaller than the injectivity radius of $M$, and small enough so that Theorem \ref{BS} holds. Let $g=g(k)$ be as in Theorem \ref{BS}. Let $N$ be a cover of $M$ with $\rank (N) \leq k$, with $\pi_1 N$ freely indecomposable, and assume that it's not the case that $N$ is compact and fibers over a $1$-orbifold with regular fiber a genus at most $g$ surface. We will show that $N$ is homotopy equivalent to a simplicial complex with at most $V=V(k,M)$ simplices; it will follow that $\pi_1 N$ has one of finitely many isomorphism types.
 
First, it could be that $N$ is noncompact, but fibers over a $1$-orbifold with a regular fiber of genus at most $g$. In this case, $N$ is homotopy equivalent to a (possibly non-orientable) surface with bounded complexity, so we're done. We may then assume that $N$ does not fiber over a $1$-orbifold with regular fiber of genus at most $g$, in which case Claim \ref{shortprods} says that all genus at most $g$ product regions in $N$ have width at most some $L=L(k,M)$. Since the width of a thick product region bounds its diameter \cite[Fact 6.4]{biringer2017thick}, by Theorem \ref{BS} we have that $CC_1(N)$ has diameter at most some constant $D=D(k,M)$. Pick a maximal $\epsilon/2$-separated set $S\subset CC_1(N)$. The number of such points is at most linear is $\mathrm{vol} \, CC_1 (N)$, which is bounded above by a function of $D$. Let $\mathcal N$ be the nerve complex of the set of $\epsilon$-balls around points in $S$. Then $\mathcal N$ has bounded degree, so the number of simplices in $\mathcal N$ is bounded by some $V=V(k,M)$. By the nerve lemma (c.f.\ Corollary 4G.3 in \cite{Hatcheralgebraic}), $\mathcal N$ is homotopy equivalent to the union of all the associated $\epsilon$-balls. This union deformation retracts onto $CC_1(N)$, via the closest point retraction, and $N$ does as well, so $N$ and $\mathcal N$ are homotopy equivalent as desired.

\bibliographystyle{amsplain}
\bibliography{total}

\end{document}